\documentclass[11pt,a4paper]{amsart}

\usepackage{amsmath,amssymb,amsthm,geometry,accents,csquotes,graphicx,tikz-cd,yhmath,newtxtext,xpatch}  
\usepackage[pdftex,hidelinks]{hyperref} % Makes references and citations into links
\usepackage[UKenglish]{babel}
\usepackage[T1]{fontenc}
 
\newtheorem{theorem}{Theorem}[section]
\newtheorem{lemma}[theorem]{Lemma}
\newtheorem{proposition}[theorem]{Proposition}

\newtheorem{innercustomthm}{Theorem}
\newenvironment{restate}[1]
{\renewcommand\theinnercustomthm{#1}\innercustomthm}
{\endinnercustomthm}

\theoremstyle{definition}
\newtheorem{remark}[theorem]{Remark}

\theoremstyle{definition}
\newtheorem{definition}[theorem]{Definition}

\DeclareMathOperator{\diam}{\mathsf{diam}}
\DeclareMathOperator{\dist}{\mathsf{dist}}

\newcommand{\calp}{\mathcal{P}}

\newcommand{\calk}{\mathcal{K}}

\makeatletter
\DeclareRobustCommand\bigop[2][1]{%
  \mathop{\vphantom{\sum}\mathpalette\bigop@{{#2}{#1}}}\slimits@
}
\newcommand{\bigop@}[2]{\bigop@@#1#2}
\newcommand{\bigop@@}[3]{%
  \vcenter{%
    \sbox\z@{$#1\sum$}%
    \hbox{\resizebox{#3\dimexpr\ifx#1\displaystyle.9\fi\dimexpr\ht\z@+\dp\z@}{!}{$\m@th#2$}}%
  }%
}
\makeatother

\begin{document}

\title{Morse and stable subgroups via the coset intersection complex}

\author[]{Tomohiro Fukaya}
\address{Department of Mathematical Sciences, Tokyo Metropolitan University, Tokyo, Japan}
\email{tomohirofukaya@tmu.ac.jp}

\author[]{Haoyang He}
\address{Department of Mathematics and Statistics, Memorial University of Newfoundland, St. John's, NL, Canada}
\email{haoyangh@mun.ca}

\author[]{Eduardo Martínez-Pedroza}
\address{Department of Mathematics and Statistics, Memorial University of Newfoundland, St. John's, NL, Canada}
\email{emartinezped@mun.ca}

\author[]{Takumi Matsuka}
\address{National Institute of Technology, Numazu College, Shizuoka, Japan}
\email{takumi.matsuka1@gmail.com}

\date{\today}

\begin{abstract}
    In this note, we study the equivalence of Morse and stable subgroups in the framework of the coset intersection complex. Under certain conditions on a coset intersection complex of a group, we prove that infinite-index Morse subgroups are stable. 
\end{abstract}

\maketitle

\section{Introduction}

The notion of stable subgroups was introduced by Durham\textendash Taylor~\cite{MR3426695}. The notion of Morse (or strongly quasiconvex) subgroups was introduced independently by Tran~\cite{MR3956891} and Genevois~\cite{MR4057355}. 

\begin{definition}
    Let $G$ be a finitely generated group and $H$ a subgroup.
    \begin{enumerate}
        \item The subgroup $H$ is \emph{stable} if $H$ is finitely generated undistorted subgroup such that for some (equivalently, every) finite generating set $S$ of $G$ and for every $k\geq1$ and $c\geq0$, there is $L = L(S,k,c)$  such that any two $(k,c)$-quasigeodesics in $G$ with the same endpoints on $H$ are at Hausdorff distance at most $L$. 
        \item The subgroup $H$ is \emph{Morse} or \emph{strongly quasiconvex} if for some (equivalently, every) finite generating set $S$ of $G$ and for every $k\geq1$  and $c\geq0$, there is $m=m(S,k,c)$  such that any $(k,c)$-quasigeodesic in $G$ with endpoints on $H$ is contained in the $M$-neighbourhood of $H$. 
    \end{enumerate}
\end{definition}

The two notions are equivalent if $G$ is hyperbolic, but they are not equivalent in general. For example, right-angled Coxeter groups contain subgroups that are Morse but not stable~\cite[Section 7]{MR3956891}, and non-hyperbolic peripheral subgroups of relatively hyperbolic groups are also Morse but not stable. It is, therefore, natural to ask when these two notions are equivalent. The following characterisation is due to Tran.

\begin{theorem}[{\cite[Theorem 4.8]{MR3956891}}]
    \label{thm_4.8tran19}
    Let $G$ be a finitely generated group. Let $H$ be an infinite subgroup of $G$. Then $H$ is stable if and only if $H$ is Morse and hyperbolic. 
\end{theorem}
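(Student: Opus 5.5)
Both directions rest on the definitions together with the standard fact that a Morse subgroup is undistorted. In particular, a Morse subgroup of a finitely generated group is finitely generated. To see this, take a geodesic in $G$ between two points of $H$. It lies in the $m$-neighbourhood of $H$, so $H$ is generated by the elements of $H$ of word length at most $2m+1$ in $G$. Undistortedness follows by the same argument, since the resulting word in these generators has length comparable to $d_G$.

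\emph{Stable $\Rightarrow$ Morse and hyperbolic.} Let $H$ be stable and fix a finite generating set $T$ of $H$. Because $H$ is undistorted, the inclusion $(H,d_T)\to (G,d_S)$ is a quasi-isometric embedding. Hence any $T$-geodesic between two points of $H$, viewed as a path in $G$ through consecutive elements of $H$, is a $(k_0,c_0)$-quasigeodesic for constants depending only on $S$ and $T$.

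For the Morse property, take any $(k,c)$-quasigeodesic in $G$ with endpoints on $H$. Compare it with a $T$-geodesic between the same endpoints. Stability puts the two paths at Hausdorff distance at most $L(S,\max(k,k_0),\max(c,c_0))$. The $T$-geodesic lies in a bounded neighbourhood of $H$, so the $(k,c)$-quasigeodesic does as well.

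For hyperbolicity, I would show that geodesic triangles in the Cayley graph of $(H,T)$ are uniformly thin. Consider a triangle with sides $\gamma_1,\gamma_2,\gamma_3$. The side $\gamma_1$ is a $(k_0,c_0)$-quasigeodesic in $G$, and so is the concatenation $\gamma_2\cdot\gamma_3$ after passing through a $G$-geodesic between the same points. Stability makes $\gamma_1$ close to a $G$-geodesic. The real obstacle is that $\gamma_2\cdot\gamma_3$ need not be quasigeodesic.

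I would handle this with the standard \enquote{slim bigons for all quasigeodesics $\Rightarrow$ hyperbolic} criterion, used in Tran's and Durham\textendash Taylor's work. Uniformly thin quasigeodesic bigons in $(H,d_T)$ follow directly: two $(k,c)$-quasigeodesics in $H$ with the same endpoints are uniform quasigeodesics in $G$, so stability makes them Hausdorff close in $G$, and undistortedness transfers this closeness back to $d_T$. One then invokes the known fact that a geodesic space in which all $(k,c)$-quasigeodesic bigons are uniformly thin is hyperbolic; for example, an infinite-area bigon argument, or Papasoglu's criterion that thin geodesic bigons imply hyperbolicity for graphs. This gives hyperbolicity of $H$.

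\emph{Morse and hyperbolic $\Rightarrow$ stable.} By the preliminary remark, $H$ is finitely generated and undistorted. Take two $(k,c)$-quasigeodesics $\alpha,\beta$ in $G$ with common endpoints $x,y\in H$. By the Morse property, both lie in the $m$-neighbourhood of $H$. Project each point of $\alpha$ to a nearest point of $H$ to get a coarse path $\alpha'$ in $H$; this path is a uniform quasigeodesic in $(H,d_T)$ by undistortedness. Do the same for $\beta$ to get $\beta'$. Since $H$ is hyperbolic, the Morse lemma makes $\alpha'$ and $\beta'$ uniformly Hausdorff close in $d_T$, hence in $d_S$. Adding $2m$ bounds the Hausdorff distance between $\alpha$ and $\beta$.

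The main obstacle is the hyperbolicity step in the first direction, specifically passing from uniform thinness of quasigeodesic bigons to hyperbolicity. I would cite Papasoglu's thin bigon criterion there, rather than reprove it.
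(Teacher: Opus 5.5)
Your argument is correct. The paper gives no proof of this statement: it quotes it from Tran. What you wrote is essentially the standard argument behind that citation. For stable $\Rightarrow$ hyperbolic, you use uniformly thin quasigeodesic bigons in $H$ plus a hyperbolicity criterion, as for stable subgroups in Durham--Taylor. For the converse, you take nearest-point projection of the two quasigeodesics onto $H$, using the Morse property and undistortedness, and then apply the Morse lemma in the hyperbolic group $H$.

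Two small cleanups are worth making.

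First, the triangle paragraph says the concatenation $\gamma_2\cdot\gamma_3$ is a $(k_0,c_0)$-quasigeodesic ``after passing through a $G$-geodesic''. That claim is false, and you retract it in the next sentence. Since the proof does not use the triangle approach, delete that paragraph.

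Second, Papasoglu's criterion for the Cayley graph of $(H,T)$ needs uniformly thin geodesic bigons whose endpoints may lie in the interiors of edges. Stability only controls paths with endpoints in $H$. The fix is immediate. Extend both sides of such a bigon by the same segment of length at most $1/2$ at each end, reaching vertices of $H$. This yields two $(1,2)$-quasigeodesics with common endpoints in $H$, and your bigon estimate applies to them. Alternatively, cite a characterisation phrased directly for quasigeodesics in geodesic spaces, such as Bonk's stability-of-quasigeodesic-segments criterion; it applies for the same reason.
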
 

The two notions have shown to be equivalent for particular classes of groups, as the following results in the literature illustrate. Note that (2) is a special case of (3) in Theorem~\ref{thm_stabMorseMCGRAAGgpr}. 

\begin{theorem}
    \label{thm_stabMorseMCGRAAGgpr}
    If $H$ is a non-trivial infinite-index Morse subgroup of one of the following groups, then $H$ is stable.
    \begin{enumerate}
        \item~\cite[Corollary B]{MR3978121} A mapping class group of an oriented, connected, finite-type surface with complexity at least two. 
        \item~\cite[Theorem 8.19]{MR3956891} A right-angled Artin group whose defining graph is finite. 
        \item~\cite[Theorem 1.5]{balasubramanya2026stablesubgroupsgraphproducts} A graph product whose defining graph is finite, connected with at least two vertices, and whose vertex groups are infinite.
    \end{enumerate}
\end{theorem}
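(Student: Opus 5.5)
This theorem collects three results from the literature, so my plan is to prove it through the paper's coset intersection complex framework rather than case by case. The reduction goes through Theorem~\ref{thm_4.8tran19}. Since a non-trivial Morse subgroup of a torsion-free group is infinite, and in the torsion cases one can pass to infinite $H$, it suffices to show that an infinite-index Morse subgroup $H$ of $G$ is hyperbolic. The general strategy has three ingredients:
\begin{itemize}
\item a finite family of subgroups $\calp=\{P_1,\dots,P_n\}$ of $G$;
\item the coset intersection complex $\calk(G,\calp)$, whose vertices are the cosets $gP_i$ and whose simplices are the collections of cosets with infinite common intersection;
\item conditions under which $\calk$ is connected and the $P_i$ behave like ``flat'' pieces.
\end{itemize}

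In each case I take $\calp$ as follows:
\begin{itemize}
\item for the mapping class group $\mathrm{MCG}(S)$, the stabilisers of curves (reducible subgroups);
\item for a right-angled Artin group, and more generally a graph product $\Gamma\mathcal{G}$, the subgroups generated by the star of a vertex, $\langle \mathrm{st}(v)\rangle = G_v\times\langle \mathrm{lk}(v)\rangle$.
\end{itemize}
These are direct products of two infinite groups, or virtually so for curve stabilisers, which contain a $\mathbb{Z}$ factor of Dehn twists. Connectedness of the defining graph, or complexity at least two, is exactly what makes $\calk$ connected: adjacent vertices give stars with infinite intersection, and the curve graph is connected.

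The key steps, in order, are these.
\begin{enumerate}
\item A Morse subgroup $H$ intersects each conjugate $gP_ig^{-1}$ in a Morse subgroup of that conjugate, since intersections of Morse subgroups are Morse (Tran).
\item In a direct product $A\times B$ of infinite groups, every Morse subgroup is either finite index or finite. So each intersection $H\cap gP_ig^{-1}$ is either finite or of finite index in $gP_ig^{-1}$.
\item If some intersection has finite index, propagate along edges of $\calk$. If $H\cap gP_ig^{-1}$ has finite index and $gP_i\cap g'P_j$ is infinite, then $H$ contains an infinite subset of $g'P_jg'^{-1}$, so $H\cap g'P_jg'^{-1}$ is infinite and hence of finite index by step 2. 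Since $\calk$ is connected and $H$ is Morse (so it has bounded packing and finite height), one shows $H$ has finite index in $G$, for instance by showing $H$ is coarsely dense. This contradicts the infinite-index assumption.
\item Hence $H$ meets every conjugate of every $P_i$ finitely. A Morse subgroup with this property should be hyperbolic. The argument is that a quasi-flat in $H$ would have to lie near some coset $gP_i$, using the structure of quasi-flats (thick of order one, or divergence). Alternatively, $H$ acts on a hyperbolic space, such as the curve graph or the contact/extension graph quotient of $\calk$, with orbit maps that are quasi-isometric embeddings. The Morse property gives a linear divergence bound, and Tran's lower divergence criterion then yields hyperbolicity.
\end{enumerate}

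I expect the main obstacle to be step 4: converting ``$H$ meets each $P_i$-conjugate finitely'' into hyperbolicity of $H$. I would first try to show that the orbit map of $H$ into a hyperbolic coned-off space is a quasi-isometric embedding, by combining the Morse property with the fact that geodesics in $G$ which stay long near a coset $gP_i$ must fellow-travel it. $H$ cannot do this, because $H\cap gP_ig^{-1}$ is finite and $H$ is Morse. This is the hierarchically hyperbolic style argument of the cited works, and making it uniform for $\calk$ is where the real work lies.
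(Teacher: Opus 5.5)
The paper does not prove this statement. The three items are quoted from Kim, Tran and Balasubramanya--Chesser--Kerr--Mangahas--Trin. Your plan to rederive them through $\calk$ follows the pattern of Theorem~\ref{thm_Mstablevf}, and your steps 1--3 are essentially Lemma~\ref{lem_ficonj} and Proposition~\ref{prop_HgPgfin}. The genuine gap is step 4, which you leave as ``should be hyperbolic''. It is the entire content of the theorem, and neither route you sketch closes it. Tran's lower relative divergence detects whether $H$ is Morse in $G$; it says nothing about the intrinsic geometry of $H$. The quasi-flat route assumes that a non-hyperbolic Morse subgroup contains a quasi-flat, which you cannot guarantee. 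The missing idea is a path-lifting property of $\calp$ combined with hyperbolicity of the coned-off Cayley graph $\widehat\Gamma$. Suppose any two elements are joined by a quasigeodesic of $\widehat\Gamma$ whose extension is a quasigeodesic of $\Gamma$ (quasi-syllabicity, Definition~\ref{def_quasisyl}). Take $q\in H$ within $\widehat\Gamma$-distance $D$ of $1$. The lifted path from $1$ to $q$ has boundedly many coset segments, so if $q$ is far from $1$ in $\Gamma$, some segment inside a coset $gP$ is long. The Morse property puts that segment near $H$. This contradicts the uniformly bounded coarse intersection of $H$ with the cosets $gP$, which follows from finiteness of each $H\cap gPg^{-1}$: up to translation by $H$, only finitely many cosets meet a fixed ball about $1$. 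This is Proposition~\ref{lem:Anthony}. It shows $H\hookrightarrow\widehat\Gamma$ is a quasi-isometric embedding, so $H$ is hyperbolic as soon as $\widehat\Gamma$ is.

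So for each family you must establish (a) quasi-syllabicity of your $\calp$, for example from hierarchy paths in $\mathrm{MCG}(S)$ and normal forms in graph products, and (b) hyperbolicity of $\widehat\Gamma$, via Masur--Minsky and the extension or crossing graph. You cannot obtain (b) from $\calk$ via Theorem~\ref{thm_Mstablevf}. For curve stabilisers, every non-filling pair of curves spans an edge of $\calk$, and such pairs have unbounded intersection number, so the $1$-skeleton of $\calk$ is not $G$-cocompact. Several smaller points also need repair. In step 1, curve stabilisers and star subgroups are not Morse in $G$, so you need Lemma~\ref{lem_prop4.11tran}, and hence undistortedness of the $P_i$. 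In step 2, $\mathrm{Stab}(\alpha)$ is a central extension by $\langle T_\alpha\rangle$ that need not virtually split. The dichotomy still holds: finite height forces an infinite Morse $Q\leqslant\mathrm{Stab}(\alpha)$ to contain some central power $T_\alpha^k$, which then lies in every conjugate of $Q$, and finite height again gives finite index. In step 3, the contradiction comes from finite height applied to $n+1$ conjugates $g_iHg_i^{-1}$, each meeting $g_0P_0g_0^{-1}$ in a finite-index subgroup, not from coarse density. Finally, step 3 needs a connected defining graph, which item (2) as written omits, and this hypothesis must be added. For $\Gamma$ an edge plus an isolated vertex, the free factor $\mathbb{Z}^2$ of $A_\Gamma\cong\mathbb{Z}^2*\mathbb{Z}$ is a non-trivial, infinite-index Morse subgroup that is not stable.
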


Some of these results are particular instances of a more general statement, which this note presents. We study the equivalence of Morse and stable in the framework of the coset intersection complex, introduced by Abbott\textendash Martínez-Pedroza~\cite{MR5034311}. 

\begin{definition}
    A \emph{group pair} $(G,\calp)$ consists of a finitely generated group $G$ and a finite collection $\calp$ of infinite subgroups. The \emph{coset intersection complex} $\calk(G,\calp)$ of a group pair is the simplicial complex whose vertices are left cosets $gP$ where $g\in G$ and $P\in\calp$, and whose $r$-simplices are collections of $r+1$ cosets $\{g_0P_0,\cdots,g_rP_r\}$ such that the intersections $\bigcap_{i=0}^rg_iP_ig_i^{-1}$ are infinite. 
\end{definition}

The main result of this note is the following. 

\begin{restate}{\ref{thm_Mstablevf}}
    Let $(G,\calp)$ be a group pair such that the following four conditions hold:
    \begin{enumerate}
        \item Each $P\in\mathcal{P}$ is undistorted in $G$.
        \item $\mathcal{P}$ is quasi-syllabic.
        \item For each $P\in\mathcal{P}$, if $Q\leqslant P$ is Morse in $P$, then $Q$ is finite or finite-index in $P$.
        \item The coset intersection complex $\mathcal{K}(G,\mathcal{P})$ is connected, hyperbolic and has $G$-cocompact one-skeleton.
    \end{enumerate}
    If $H$ is a non-trivial infinite-index Morse subgroup of $G$, then $H$ is a stable subgroup. Moreover $H$ is virtually free. 
\end{restate}

See Definition~\ref{def_quasisyl} for the notion of quasi-syllabic. Our proof is inspired by the work of Tran~\cite{MR3956891}. We remark that a similar strategy in the context of graph products is adapted by Balasubramanya\textendash Chesser\textendash Kerr\textendash Mangahas\textendash Trin~\cite[Section 6]{balasubramanya2026stablesubgroupsgraphproducts}. On the way to proving Theorem~\ref{thm_Mstablevf}, we obtain the following theorem.

\begin{restate}{\ref{thm_gaot}}
    Let $G$ be a finitely generated group acting cocompactly on a simplicial tree $T$. Suppose that 
    \begin{enumerate}
        \item Every edge stabiliser is infinite.
        \item Every vertex stabiliser is undistorted in $G$.
        \item Every vertex stabiliser has only finite or finite-index Morse subgroups.
    \end{enumerate}
    If $H$ is an infinite-index Morse subgroup of $G$, then $H$ is virtually free and stable.
\end{restate}

We remark that, in the first version of this article, the result above requires the action to be acylindrical, so that the tree is quasi-isometric to a coset intersection complex. This condition is not required in the new proof. 

\subsection*{Acknowledgements}

Many thanks to Anthony Genevois for continuous feedback and corrections on our work, in particular, for introducing us to the notion of quasi-syllabic and explaining us how to use it in our context. We also thank Monika Kudlinska for a correction in the first version of this note. TF is supported by JSPS KAKENHI Grant number 24K06741. HH is partially supported by the School of Graduate Studies at Memorial University of Newfoundland. EMP acknowledges funding by the Natural Sciences and Engineering Research Council (NSERC) of Canada. 

\section{Proof of Theorem~\ref{thm_Mstablevf}}
\label{sec_proofmain}

In this section we prove the following theorem. 

\begin{theorem}
    \label{thm_Mstablevf}
    Let $(G,\mathcal{P})$ be a group pair such that the following four conditions hold:
    \begin{enumerate}
        \item Each $P\in\mathcal{P}$ is undistorted in $G$.
        \item $\mathcal{P}$ is quasi-syllabic.
        \item For each $P\in\mathcal{P}$, if $Q\leqslant P$ is Morse in $P$, then $Q$ is finite or finite-index in $P$.
        \item The coset intersection complex $\mathcal{K}(G,\mathcal{P})$ is connected, hyperbolic and has $G$-cocompact one-skeleton. 
    \end{enumerate}
    If $H$ is an infinite-index Morse subgroup of $G$, then $H$ is a stable subgroup.   
\end{theorem}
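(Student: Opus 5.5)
By Theorem~\ref{thm_4.8tran19}, it suffices to show that $H$ is hyperbolic; if $H$ is finite there is nothing to prove. The plan is to show that $H$ acts on the hyperbolic graph $\mathcal{K}(G,\mathcal{P})$ with orbit maps that are quasi-isometric embeddings. Then $H$ is quasi-isometric to a quasiconvex subset of a hyperbolic space, so $H$ is hyperbolic.

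\textbf{Step 1: intersections with cosets.} Morse subgroups are finitely generated and undistorted, and the intersection of two Morse subgroups is Morse (Tran). The conjugates $gPg^{-1}$ are undistorted by condition (1). It follows that $H\cap gPg^{-1}$ is Morse in $gPg^{-1}$: a quasigeodesic in $gPg^{-1}$ is a quasigeodesic in $G$ up to the distortion constants. By condition (3), each $H\cap gPg^{-1}$ is therefore finite or of finite index in $gPg^{-1}$.

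We rule out finite index. Suppose $H\cap gPg^{-1}$ has finite index in $gPg^{-1}$. If $g'P'$ is adjacent to $gP$ in $\mathcal{K}$, then $gPg^{-1}\cap g'P'g'^{-1}$ is infinite, so $H\cap g'P'g'^{-1}$ is infinite. By the dichotomy, $H\cap g'P'g'^{-1}$ then has finite index in $g'P'g'^{-1}$. Since $\mathcal{K}$ is connected (condition (4)), this propagates to every vertex. Then $H$ contains finite-index subgroups of all conjugates of all $P$. Because $\mathcal{K}$ has a $G$-cocompact one-skeleton and is connected, $G$ is generated by finitely many such conjugates together with finitely many elements, and a Morse argument should then force $H$ to have finite index in $G$. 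I expect this to be a quasi-syllabic argument rather than a group-theoretic one: using Definition~\ref{def_quasisyl}, express a geodesic of $G$ as a concatenation of syllables lying in cosets of members of $\mathcal{P}$, and use that $H$ coarsely contains these cosets. This contradicts the assumption that $H$ has infinite index. Hence every $H\cap gPg^{-1}$ is finite.

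\textbf{Step 2: quasi-isometric embedding into $\mathcal{K}$.} Fix a base vertex $P_0\in\mathcal{K}$ and consider the orbit map $h\mapsto hP_0$. Cocompactness of the one-skeleton makes this map coarsely Lipschitz. For the lower bound, take $h\in H$ and a geodesic $[1,h]$ in $G$. Quasi-syllabicity should let us replace it by a uniform quasigeodesic that is a concatenation of syllables $g_iP_i$, whose number is comparable to $d_{\mathcal{K}}(P_0,hP_0)$. This is the main obstacle: we must control the syllable lengths. Since $H$ is Morse, this quasigeodesic stays uniformly close to $H$. If a syllable were very long, a long segment of a coset $g_iP_i$ would lie within bounded distance of $H$. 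A standard pigeonhole argument then gives infinitely many elements of $H$ coarsely in $g_iP_i$, contradicting the finiteness of $H\cap g_iP_ig_i^{-1}$. So syllable lengths are uniformly bounded, and $|h|\preceq d_{\mathcal{K}}(P_0,hP_0)$.

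\textbf{Step 3: conclusion.} $H$ acts properly on its orbit, with finite stabilisers by Step 1, and the orbit is quasi-isometrically embedded in the hyperbolic graph $\mathcal{K}$, so $H$ is hyperbolic. Theorem~\ref{thm_4.8tran19} then shows that $H$ is stable. For the virtually free claim, one could further argue that the Morse property prevents any one-ended piece, for example by Theorem~\ref{thm_gaot} applied to a suitable tree; this is not needed for stability. Step 2 is the heart of the argument and should be carried out first, since the finite-index exclusion in Step 1 likely reuses the same syllable estimate.
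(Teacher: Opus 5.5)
\textbf{Steps 2 and 3.} These follow the paper's route. The Morse property, quasi-syllabic extensions and uniformly bounded coarse intersections give a quasi-isometric embedding of $H$ into $\widehat\Gamma$. Since $\widehat\Gamma$ is quasi-isometric to $\mathcal{K}$, Tran's Theorem~4.8 then applies. Two details need filling in:
\begin{enumerate}
\item For the uniform syllable bound, say explicitly that after translating by an element of $H$ the relevant coset passes within $2L$ of $1$, so only finitely many cosets need checking.
\item That $H\cap gPg^{-1}$ is Morse in $gPg^{-1}$ needs the coarse intersection property (Tran's Proposition~4.11), not just undistortedness.
\end{enumerate}

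\textbf{The gap is at the end of Step 1.} After propagating along $\mathcal{K}$, you only know that $H$ contains a finite-index subgroup of every $gPg^{-1}$. This alone does not give $[G:H]<\infty$. The normal closure of $\bigcup\mathcal{P}$ has the same property and can have infinite index: if $G$ is an HNN extension of some $P\in\mathcal{P}$, then $G/\langle\langle P\rangle\rangle\cong\mathbb{Z}$. So the Morse hypothesis must enter essentially here, and your sketch does not show how. Its three ingredients all fail:
\begin{enumerate}
\item The coarse containment $gP\subseteq H^{+C}$ holds with $C$ depending on the $H$-orbit of $gP$. A uniform $C$ over all cosets is equivalent to $[G:H]<\infty$ (every $g$ lies in some $gP$), so asserting it just restates the goal. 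Chaining along syllables of a path from $1$ to $g$ only makes the constants grow with the number of syllables.
\item The Morse property cannot be applied to a quasigeodesic from $1$ to $g\notin H$.
\item Reusing the Step 2 syllable estimate is circular, since that estimate presupposes the finiteness of $H\cap g_iP_ig_i^{-1}$ that Step 1 is meant to establish.
\end{enumerate}

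\textbf{The missing idea} is that Morse subgroups have finite height (Tran, Theorem~4.15); this is what the paper uses. Let $H$ have height $n$. Since $[G:H]=\infty$, choose distinct cosets $g_1H,\dots,g_{n+1}H$. Apply your propagated conclusion to the vertex $g_i^{-1}g_0P_0$ and conjugate by $g_i$. This shows that $g_iHg_i^{-1}\cap g_0P_0g_0^{-1}$ has finite index in $g_0P_0g_0^{-1}$ for each $i$. Hence $\bigcap_{i=1}^{n+1}g_iHg_i^{-1}$ contains a finite-index subgroup of the infinite group $g_0P_0g_0^{-1}$, contradicting height $n$. This step uses only connectivity of $\mathcal{K}$, not quasi-syllabicity or cocompactness. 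With it, every $H\cap gPg^{-1}$ is finite and the rest of your argument goes through.
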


We start with the following proposition. For a group pair $(G,\calp)$, we denote $G/\calp=\{gP\mid g\in G,P\in\calp\}$.

\begin{proposition}
    \label{prop_HgPgfin}
    Let $(G,\mathcal{P})$ be a group pair such that the following three conditions hold:
    \begin{enumerate}
        \item Each $P\in\mathcal{P}$ is undistorted in $G$.
        \item For each $P\in\mathcal{P}$, if $Q\leqslant P$ is Morse in $P$, then $Q$ is finite or finite-index in $P$.
        \item The coset intersection complex $\mathcal{K}(G,\mathcal{P})$ is connected. 
    \end{enumerate}
    Let $H$ be an infinite-index Morse subgroup of $G$. Then for every $gP\in G/\calp$,  $H\cap gPg^{-1}$ is finite. 
\end{proposition}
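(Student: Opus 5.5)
We will argue by contradiction, using the standard facts that intersections of Morse subgroups are Morse (Tran), and that if $K\leqslant L\leqslant G$ with $K$ Morse in $G$ and $L$ undistorted in $G$, then $K$ is Morse in $L$. Note that $gPg^{-1}$ is undistorted in $G$ because $P$ is, and conjugation is a quasi-isometry of $G$. The subgroup $H\cap gPg^{-1}$ is then Morse in $gPg^{-1}$. To see this, take a $(k,c)$-quasigeodesic in $gPg^{-1}$ with endpoints in $H\cap gPg^{-1}$. Since $gPg^{-1}$ is undistorted, it is a quasigeodesic in $G$ with controlled constants, so it lies uniformly close to $H$ (as $H$ is Morse in $G$), and it lies in $gPg^{-1}$. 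A point close to both $H$ and $gPg^{-1}$ is uniformly close to $H\cap gPg^{-1}$; this is the standard lemma that $N_r(H)\cap N_r(K)\subseteq N_{r'}(H\cap K)$ for subgroups of a finitely generated group. Distortion of $gPg^{-1}$ then converts $G$-distance into $gPg^{-1}$-distance. Conjugating back, $g^{-1}Hg\cap P$ is Morse in $P$. By hypothesis (2) it is therefore finite or of finite index in $P$.

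So suppose, for contradiction, that $H\cap gPg^{-1}$ is infinite for some $gP$. Then $H\cap gPg^{-1}$ has finite index in $gPg^{-1}$. The goal is to propagate this along the connected complex $\mathcal{K}(G,\mathcal{P})$. Let $g'P'$ be adjacent to $gP$, so that $gPg^{-1}\cap g'P'g'^{-1}$ is infinite. Since $H\cap gPg^{-1}$ has finite index in $gPg^{-1}$, the intersection $H\cap gPg^{-1}\cap g'P'g'^{-1}$ has finite index in the infinite group $gPg^{-1}\cap g'P'g'^{-1}$, and so it is infinite. Hence $H\cap g'P'g'^{-1}$ is infinite, and by the previous paragraph it has finite index in $g'P'g'^{-1}$. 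By connectedness of $\mathcal{K}$, induction along edge paths gives that $H\cap xQx^{-1}$ has finite index in $xQx^{-1}$ for every vertex $xQ$.

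The contradiction comes from showing that $H$ then has finite index in $G$. Since $H\cap P$ has finite index in $P$ for each $P\in\mathcal{P}$, the subgroup $H$ contains finite-index subgroups of all conjugates of the $P$. The step I expect to be the main obstacle is showing that $G$ is covered by finitely many cosets of $H$. My first approach is geometric. The set $H$ is coarsely dense in each coset $xQ$ up to a constant depending on the index, but that index might vary with $x$, so uniformity is the issue. I would avoid this by using a finite generating set of $G$ and connectedness of $\mathcal{K}$ to show that $G$ is generated by $\bigcup_{P} P$ together with finitely many elements. Each generator $s$ connects $P$ to $sP$ via a finite path of cosets whose stabilisers are conjugates. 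Then consider the orbit map $G\to G/H$. The finite-index subgroup $H\cap P$ implies that $P$ has finitely many orbits on $G/H$ near $H$. The element $s$ maps $H$ to $sH$, and the finite-index condition for $sPs^{-1}$ and the chain of infinite intersections shows that $sH$ is commensurable-close to $H$. More concretely, for adjacent vertices, $H$ and $xHx^{-1}$ both contain finite-index subgroups of an infinite group $gPg^{-1}\cap g'P'g'^{-1}$.

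If this becomes messy, an alternative is to use the Morse property directly. Choose an infinite-order element, or an infinite sequence, in $gPg^{-1}\cap g'P'g'^{-1}$ lying in $H$. Then $H$ is coarsely dense in the whole coset of each $xQ$ adjacent to it, with bounded constant. Applying this along finitely many generator paths gives $sH\subseteq N_R(H)$ for each generator $s$, which forces $G=N_R(H)$ and hence $[G:H]<\infty$. This contradicts the assumption that $H$ has infinite index, so $H\cap gPg^{-1}$ is finite for every $gP\in G/\mathcal{P}$.
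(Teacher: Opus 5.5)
Your first two steps are correct and match the paper. First, $H\cap gPg^{-1}$ is Morse in the undistorted subgroup $gPg^{-1}$ (Tran's Proposition~4.11, which you essentially reprove), hence finite or of finite index. Second, an infinite intersection propagates along edge paths of $\calk(G,\calp)$, giving finite index in every $xQx^{-1}$.

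The gap is the last step, where you try to prove $[G:H]<\infty$; neither sketch achieves this.
\begin{itemize}
\item In your second sketch, the implication ``$sH\subseteq N_R(H)$ for every generator $s$ forces $G=N_R(H)$'' is false. Iterating only yields $s_1\cdots s_mH\subseteq N_{mR}(H)$, and every infinite-index normal subgroup satisfies $sH\subseteq N_1(H)$ for all $s$.
\item The premise of that sketch is not justified either. That $H$ and $s^{-1}Hs$ both contain finite-index subgroups of $P$ only shows that $H\cap s^{-1}Hs$ is infinite. But $sH\subseteq N_R(H)$ requires $H\cap s^{-1}Hs$ to have finite index in $H$.
\item Your first sketch does not use the Morse property after the propagation step at all, and it cannot work without it. Take $G=\mathbb{Z}^2$, $\calp=\{\mathbb{Z}\times 0\}$ and $H=\mathbb{Z}\times 0$. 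Hypotheses (1)--(3) hold, $\calk$ is connected, and $H$ has finite index in every conjugate of $P$, yet $[G:H]=\infty$. Of course $H$ is not Morse here.
\end{itemize}
So some genuinely global consequence of $H$ being Morse in $G$ is needed. Coarse-density arguments whose constants depend on the coset will not provide it.

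The missing ingredient is that Morse subgroups have finite height (Tran, Theorem~4.15), and with it you never need $[G:H]<\infty$. Apply your propagation step to the vertex $x^{-1}g_0P_0$ and conjugate by $x$. This shows that $xHx^{-1}\cap g_0P_0g_0^{-1}$ has finite index in $g_0P_0g_0^{-1}$ for every $x\in G$. Now suppose $H$ has height $n$ and infinite index, and choose distinct cosets $g_1H,\dots,g_{n+1}H$. Then $\bigcap_{i=1}^{n+1}g_iHg_i^{-1}$ contains a finite-index subgroup of the infinite group $g_0P_0g_0^{-1}$, contradicting height $n$. This is precisely how the paper concludes.
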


The proof of this proposition requires the following three lemmata. 

\begin{lemma}
    For subgroups, the properties of being undistorted and being Morse are preserved under conjugation by any element of the ambient group. 
\end{lemma}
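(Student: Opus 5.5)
The plan is to reduce both properties to the behaviour of quasigeodesics under left translation, exploiting the fact that conjugation by $g$ is, up to bounded error, the same as left translation by $g$. Fix a finite generating set $S$ of $G$, with word metric $d_S$, and let $H\leqslant G$ and $g\in G$. Left multiplication $x\mapsto gx$ is an isometry of $(G,d_S)$. Right multiplication $x\mapsto xg^{-1}$ moves every point by exactly $|g|_S$, since $d_S(x,xg^{-1})=|g^{-1}|_S$. Hence the conjugation map $x\mapsto gxg^{-1}$ is within uniformly bounded distance $|g|_S$ of an isometry, and $gHg^{-1}$ lies within Hausdorff distance $|g|_S$ of the coset $gH$, which is itself the isometric image of $H$.

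\emph{Undistortedness.} Suppose $H$ is finitely generated by a finite set $T$ and undistorted, i.e.\ $|h|_T\le \lambda|h|_S+\lambda$ for all $h\in H$. Then $gHg^{-1}$ is generated by $gTg^{-1}$, and $|ghg^{-1}|_{gTg^{-1}}=|h|_T$. Moreover $|h|_S\le |ghg^{-1}|_S+2|g|_S$. Combining these,
\[
|ghg^{-1}|_{gTg^{-1}}\le \lambda|ghg^{-1}|_S+\lambda(2|g|_S+1),
\]
so $gHg^{-1}$ is undistorted.

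\emph{Morse property.} Let $\gamma$ be a $(k,c)$-quasigeodesic in $G$ with endpoints $gh_1g^{-1}$ and $gh_2g^{-1}$ in $gHg^{-1}$. The path $g^{-1}\gamma$ is again a $(k,c)$-quasigeodesic, with endpoints $h_1g^{-1}$ and $h_2g^{-1}$, each at distance $|g|_S$ from $H$. Concatenate a geodesic segment of length $|g|_S$ at each end, joining $h_1$ to $h_1g^{-1}$ and $h_2g^{-1}$ to $h_2$. The result is a $(k,c')$-quasigeodesic with endpoints in $H$, where $c'$ depends only on $k$, $c$ and $|g|_S$. This concatenation step is the only point needing care. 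I would handle it either by the standard fact that modifying a quasigeodesic within bounded distance of its endpoints yields a quasigeodesic with controlled constants, or, more cheaply, by using the version of the Morse definition for quasigeodesics whose endpoints lie within bounded distance of $H$, which follows from the original by the same argument.

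By the Morse property of $H$, this modified path lies in the $m(S,k,c')$-neighbourhood of $H$. In particular $g^{-1}\gamma$ lies in that neighbourhood, so $\gamma$ lies in the $m$-neighbourhood of $gH$. Since $gH$ is within Hausdorff distance $|g|_S$ of $gHg^{-1}$, the path $\gamma$ lies in the $(m+|g|_S)$-neighbourhood of $gHg^{-1}$. This bound depends only on $k$, $c$ and $g$, so $gHg^{-1}$ is Morse.
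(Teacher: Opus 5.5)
Your proof is correct and follows the same approach as the paper. In both, left multiplication by $g$ is an isometry taking $H$ to $gH$, and right multiplication by $g^{-1}$ is a bounded perturbation taking $gH$ to $gHg^{-1}$ (the paper calls it a quasi-isometry). Your version just makes explicit what the paper leaves implicit: the direct length estimate for undistortedness, and the endpoint adjustment by short geodesics for the Morse property, which avoids pushing paths through right multiplication (that map does not send edge paths to edge paths).
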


\begin{proof}
   Let $H$ be an undistorted (resp. Morse) subgroup of $G$ and $g\in G$. Since multiplication on the left by $g$ defines an isometry $G\to G$, the subspace $gH$ is undistorted (resp. Morse) in $G$. On the other hand, multiplication on the right by $g$ defines a quasi-isometry $G\to G$. Since quasi-isometries maps (quasi-)geodesics to (quasi-)geodesics, $gHg^{-1}$ is undistorted (resp. Morse).
\end{proof}

The above lemma implies that under the assumptions of Proposition~\ref{prop_HgPgfin}, any subgroup $gPg^{-1}$, with $gP\in G/\calp$, is undistorted and its Morse subgroups are either finite or finite-index.   

\begin{lemma}[{\cite[Proposition 4.11]{MR3956891}}]
    \label{lem_prop4.11tran}
    Let $G$ be a finitely generated group. Let $P$ be an undistorted subgroup of $G$. If $A$ is a Morse subgroup of $G$, then $A\cap P$ is a Morse subgroup of $P$. In particular, $A\cap P$ is finitely generated and undistorted in $P$.
\end{lemma}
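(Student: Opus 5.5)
The plan is to transfer a quasigeodesic in $P$ to a quasigeodesic in $G$ using that $P$ is undistorted, apply the Morse property of $A$ in $G$, and then show that points of $P$ lying near $A$ are near $A\cap P$.

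Fix finite generating sets $S_P$ of $P$ and $S\supseteq S_P$ of $G$, and write $d_P$, $d_G$ for the word metrics. Since $P$ is undistorted, there are constants $\lambda\geq 1$, $\epsilon\geq 0$ with
$$d_G(x,y)\leq d_P(x,y)\leq \lambda d_G(x,y)+\epsilon \quad\text{for all } x,y\in P.$$

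Let $\gamma$ be a $(k,c)$-quasigeodesic in the Cayley graph of $(P,S_P)$ with endpoints in $A\cap P$.

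\emph{Step 1.} Composing with the inclusion $P\hookrightarrow G$, which is a quasi-isometric embedding, $\gamma$ becomes a $(k',c')$-quasigeodesic in $G$. Here $k',c'$ depend only on $k,c,\lambda,\epsilon$. Its endpoints lie in $A$, so the Morse property of $A$ gives $\gamma\subseteq N_m(A)$ in $d_G$, with $m=m(S,k',c')$. The vertices of $\gamma$ also lie in $P$.

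\emph{Step 2.} I would prove the standard coset-intersection fact: for every $r\geq0$ there is $r'$ such that
$$N_r(A)\cap P\subseteq N_{r'}(A\cap P)$$
in $d_G$.
Suppose $x=au\in P$ with $a\in A$ and $|u|_S\leq r$. Then $a^{-1}x=u$ lies in the finite ball $B_G(r)$.
For each $w\in B_G(r)$ that can be written as $a^{-1}p$ with $a\in A$ and $p\in P$, fix one such representation $w=a_w^{-1}p_w$.
If $a^{-1}x=w$, then $x p_w^{-1}=a a_w^{-1}$, and this element lies in $A\cap P$.
Its $d_G$-distance from $x$ is $|p_w|_S$.
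So $r'=\max_w |p_w|_S$ works, and this maximum is finite because the ball $B_G(r)$ is finite.

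\emph{Step 3.} Return to the $P$-metric. Each vertex of $\gamma$ is a point of $P$ within $d_G$-distance $r'$ (taking $r=m$) of a point of $A\cap P\subseteq P$. By the undistortion inequality, it is therefore within $d_P$-distance $\lambda r'+\epsilon$ of that point. Points on edges add at most $1/2$. Hence $\gamma\subseteq N_{\lambda r'+\epsilon+1}(A\cap P)$ in $P$, with a constant depending only on $k$ and $c$. This shows $A\cap P$ is Morse in $P$.

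\emph{Step 4.} For the last assertion, note that Morse implies quasiconvex in $P$, taking geodesics as the quasigeodesics. A quasiconvex subgroup of a finitely generated group is finitely generated and undistorted, by the usual argument generating it by elements of bounded length.

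The only step requiring an idea is Step 2, the finiteness argument relating $N_r(A)\cap P$ to $A\cap P$. Everything else is bookkeeping of quasi-isometry constants.
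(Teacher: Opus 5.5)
Your proof is correct and is essentially the standard argument behind the cited result (Tran's Proposition 4.11, which the paper quotes without reproving it): push the quasigeodesic into $G$ using that $P$ is undistorted, apply the Morse property of $A$, use the finiteness fact $N_r(A)\cap P\subseteq N_{r'}(A\cap P)$, pull back to the metric of $P$, and obtain finite generation and undistortion of $A\cap P$ from quasiconvexity. The only bookkeeping to tidy is that if the quasigeodesic is not a vertex-to-vertex path, you should apply Step 2 to the nearest vertices with $r=m+1$, which does not affect the argument.
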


\begin{lemma}
    \label{lem_ficonj}
    Let $(G,\mathcal{P})$ be a group pair with the same assumptions of Proposition~\ref{prop_HgPgfin}. Let $P\in\mathcal{P}$. Let $H$ be a Morse subgroup of $G$. 
    \begin{enumerate}
        \item[(i)] If $H\cap gPg^{-1}$ is infinite, then $H\cap gPg^{-1}$ is finite-index in $gPg^{-1}$.
        \item[(ii)] If there is $gP\in G/\mathcal{P}$ such that $H\cap gPg^{-1}$ is infinite, then for every $hQ\in G/\mathcal{P}$, $H\cap hQh^{-1}$ is infinite and finite-index in $hQh^{-1}$.
    \end{enumerate}
\end{lemma}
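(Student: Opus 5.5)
For part (i), I will work inside the conjugate $gPg^{-1}$. By the first lemma, $gPg^{-1}$ is undistorted in $G$, and its Morse subgroups are either finite or of finite index, since conjugation by $g$ identifies Morse subgroups of $P$ with Morse subgroups of $gPg^{-1}$. Lemma~\ref{lem_prop4.11tran} applied with the undistorted subgroup $gPg^{-1}$ and the Morse subgroup $H$ shows that $H\cap gPg^{-1}$ is Morse in $gPg^{-1}$. Assumption (2) of Proposition~\ref{prop_HgPgfin} then says this intersection is finite or of finite index. It is infinite by hypothesis, so it has finite index in $gPg^{-1}$.

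For part (ii), I will propagate infiniteness along edges of $\calk(G,\calp)$ and then use connectedness. Suppose $H\cap gPg^{-1}$ is infinite and $\{gP,hQ\}$ is an edge, so $gPg^{-1}\cap hQh^{-1}$ is infinite. By (i), $H\cap gPg^{-1}$ has finite index in $gPg^{-1}$. Then
\[
H\cap gPg^{-1}\cap hQh^{-1}
\]
has finite index in $gPg^{-1}\cap hQh^{-1}$, because intersecting a finite-index subgroup with another subgroup preserves finite index. Hence it is infinite, and so $H\cap hQh^{-1}$ is infinite. Applying (i) again, $H\cap hQh^{-1}$ has finite index in $hQh^{-1}$.

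Since $\calk(G,\calp)$ is connected (assumption (3)), any vertex $hQ$ is joined to $gP$ by a finite edge path. Induction along this path gives the conclusion for every $hQ\in G/\calp$.

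There is no real obstacle here. The only point needing care is the elementary fact that if $A\le B$ has finite index and $C\le B$, then $A\cap C$ has finite index in $C$. Finite index is what lets infiniteness pass from the edge group $gPg^{-1}\cap hQh^{-1}$ to its intersection with $H$.
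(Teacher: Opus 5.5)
Your proposal is correct and follows essentially the same route as the paper. Part (i) applies Tran's Proposition 4.11 to the undistorted conjugate $gPg^{-1}$ together with hypothesis (2), and part (ii) propagates infiniteness and finite index along an edge path in the connected complex $\mathcal{K}(G,\mathcal{P})$, using that a finite-index subgroup meets any other subgroup in a finite-index subgroup of it.
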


The proof of this lemma, and the proof of Proposition~\ref{prop_HgPgfin}, follow similar lines of the proof of~\cite[Proposition 8.18]{MR3956891}.

\begin{proof}
    To prove (i), since $H$ is Morse in $G$ and $gPg^{-1}$ is undistorted in $G$, the subgroup $H\cap gPg^{-1}$ is Morse in $gPg^{-1}$ by Lemma~\ref{lem_prop4.11tran}. By hypothesis (2), this implies that $H\cap gPg^{-1}$ is finite-index in $gPg^{-1}$. 

    To show (ii), since $\mathcal{K}(G,\mathcal{P})$ is connected, there is a path with vertices \[gP=g_0P_0,g_1P_1,\cdots,g_{n-1}P_{n-1},g_nP_n=hQ\] in $\mathcal{K}(G,\mathcal{P})$. By definition, $g_iP_ig_i^{-1}\cap g_{i+1}P_{i+1}g_{i+1}^{-1}$ is infinite for every $0\le i\le n-1$. Since $H\cap g_0P_0g_0^{-1}$ is infinite, $H\cap g_0P_0g_0^{-1}$ is finite-index in $g_0P_0g_0^{-1}$ by (i). It follows that $H\cap g_0P_0g_0^{-1}\cap g_1P_1g_1^{-1}$ is finite-index in $g_0P_0g_0^{-1}\cap g_1P_1g_1^{-1}$, and $H\cap g_0P_0g_0^{-1}\cap g_1P_1g_1^{-1}$ is infinite since $g_0P_0g_0^{-1}\cap g_1P_1g_1^{-1}$ is infinite. In particular, $H\cap g_1P_1g_1^{-1}$ is infinite, so part (i) implies that $H\cap g_1P_1g_1^{-1}$ is finite-index in $g_1P_1g_1^{-1}$. By induction, we have that $H\cap hQh^{-1}$ is infinite and finite-index in $hQh^{-1}$. 
\end{proof}

Before proving Proposition~\ref{prop_HgPgfin} we recall the following definition. We say that a subgroup $H$ of a group $G$ has \emph{height at most $n$} if any $(n+1)$ conjugates $g_0Hg_0^{-1},\cdots,g_nHg_n^{-1}$ with $g_0H,\cdots,g_nH$ distinct have finite intersection. 

\begin{proof}[Proof of Proposition~\ref{prop_HgPgfin}]
    Assume for a contradiction that there is $g_0P_0\in G/\mathcal{P}$ such that $H\cap g_0P_0g_0^{-1}$ is infinite. Since $H$ is Morse in $G$, it has finite height~\cite[Theorem 4.15]{MR3956891}, say $n$. Since $H$ is infinite-index in $G$, take $n+1$ distinct cosets $g_1H,\cdots,g_{n+1}H$. Since $H\cap g_0P_0g_0^{-1}$ is infinite, Lemma~\ref{lem_ficonj}(ii) implies that  $H\cap g_i^{-1}g_0P_0g_0^{-1}g_i$ is finite-index in $g_i^{-1}g_0P_0g_0^{-1}g_i$. In particular, we have that  $g_iHg_i^{-1}\cap g_0P_0g_0^{-1}$ is finite-index in $g_0P_0g_0^{-1}$ for every $1\le i\le n+1$. Therefore $\left(\bigcap_{i=1}^{n+1}g_iHg_i^{-1}\right)\cap g_0P_0g_0^{-1}$ is finite index in $g_0P_0g_0^{-1}$. In particular, $(\bigcap_{i=1}^{n+1}g_iHg_i^{-1})$ is infinite since $g_0P_0g_0^{-1}$ is infinite, contradicting that $H$ has height $n$. 
\end{proof}

Given a group pair $(G,\calp)$, denote $\Gamma(G)$, $\widehat\Gamma(G,\mathcal{P})$ and $\mathcal{K}(G,\mathcal{P})$ as the Cayley graph, the coned-off Cayley graph, and the coset intersection complex,  respectively.   Let $\widetriangle\Gamma=\widetriangle\Gamma(G,\mathcal{P})$ be the complex obtained from $\widehat\Gamma$ by adding all edges of $\mathcal{K}$, i.e. $\widetriangle\Gamma =\widehat\Gamma\cup\mathcal{K}$. Since $\widetriangle\Gamma$ is obtained from $\mathcal{K}$ by adding a finite number of $G$-orbits of edges, we have the following statement. 

\begin{lemma} 
	\begin{enumerate}
		\item[]
		\item If $\mathcal{K}$ is connected, then the inclusion $\mathcal{K}\hookrightarrow \widetriangle\Gamma$ is a $G$-equivariant quasi-isometry.
		\item  If $\mathcal{K}$ has cocompact one-skeleton, then $\widehat \Gamma \hookrightarrow \widetriangle\Gamma$ is a $G$-equivariant quasi-isometry.  
		\item If $\mathcal{K}$ is connected and has $G$-cocompact one-skeleton, then $\widehat\Gamma$ and $\mathcal{K}$ are quasi-isometric.
	\end{enumerate}
\end{lemma}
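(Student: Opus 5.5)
The three claims are statements about graphs with a common vertex set, up to the usual identifications, and I will prove them with one principle. If $X\subseteq Y$ are connected graphs on the same vertex set, $G$ acts on $Y$ preserving $X$, and $Y$ has only finitely many $G$-orbits of edges not in $X$, then the inclusion $X\hookrightarrow Y$ is a $G$-equivariant quasi-isometry. To see this, for each of the finitely many orbit representatives $e=\{u,v\}$ of edges of $Y\setminus X$, fix $d_X(u,v)<\infty$; this uses connectedness of $X$. Let $C$ be the maximum of these distances. By $G$-invariance of $d_X$, every edge of $Y$ not in $X$ joins vertices at $X$-distance at most $C$. Replacing each such edge of a $Y$-geodesic by an $X$-path gives $d_Y\le d_X\le C\,d_Y$ on vertices. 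Since every point of $Y$ lies within distance $1/2$ of a vertex, the inclusion is coarsely surjective, so it is a quasi-isometry, and it is equivariant by construction.

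For (1) I will take $X=\mathcal{K}^{(1)}$ and $Y=\widetriangle\Gamma$. Here the vertex sets differ, since $\widetriangle\Gamma$ also contains the group elements $g\in G$. So I first note that each $g$ is adjacent in $\widehat\Gamma$ to the cone vertex $gP$ for any fixed $P\in\calp$. Hence every vertex of $\widetriangle\Gamma$ is within distance $1$ of $\mathcal{K}$, which gives coarse surjectivity. For the distance estimate, the edges of $\widetriangle\Gamma$ not in $\mathcal{K}$ are Cayley edges $\{g,gs\}$ with $s$ in the finite generating set $S$, and cone edges $\{g,gP\}$ with $P\in\calp$. These form finitely many $G$-orbits, because $S$ and $\calp$ are finite. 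Given a $\widetriangle\Gamma$-geodesic between cosets, I will replace each group-element vertex $g$ by the coset $gP_0$ for a fixed $P_0\in\calp$. Each edge then becomes a pair of cosets, either $(gP_0,gsP_0)$, $(gP_0,gP)$, or a $\mathcal{K}$-edge. The pairs of the first two kinds lie in finitely many $G$-orbits, so their $\mathcal{K}$-distances are uniformly bounded by some $C$, using that $\mathcal{K}$ is connected. This gives $d_{\mathcal{K}}\le C\,d_{\widetriangle\Gamma}+C'$ on cosets, while $d_{\widetriangle\Gamma}\le d_{\mathcal{K}}$ holds trivially.

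For (2), $\widehat\Gamma$ and $\widetriangle\Gamma$ have the same vertex set, and $\widehat\Gamma$ is connected, being a coned-off Cayley graph of a finitely generated group. The added edges are the $1$-simplices of $\mathcal{K}$. Cocompactness of the one-skeleton of $\mathcal{K}$ means there are finitely many $G$-orbits of edges, so the principle above applies directly.

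Part (3) follows by composing the quasi-isometry $\mathcal{K}\to\widetriangle\Gamma$ from (1) with a quasi-inverse of the inclusion $\widehat\Gamma\to\widetriangle\Gamma$ from (2).

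The only genuine subtlety is in (1): the vertex sets differ, so the argument needs the coarse retraction $g\mapsto gP_0$ and a check that it sends edges to uniformly bounded pairs. That check is what uses finiteness of $S$ and $\calp$ together with connectedness of $\mathcal{K}$.
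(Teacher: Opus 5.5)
Your proof is correct and follows the paper's approach. The paper justifies all three parts with the single observation that $\widetriangle\Gamma$ is obtained from $\mathcal{K}$ (respectively $\widehat\Gamma$) by adding finitely many $G$-orbits of edges, which is your general principle; your part (1) is more careful, because that one-liner ignores the extra group-element vertices of $\widetriangle\Gamma$, and your retraction $g\mapsto gP_0$ (well defined and equivariant since $G$ acts freely on itself) handles them rigorously.
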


A path in a graph is a sequence of vertices $v_0,v_1,\ldots , v_\ell$ such that $v_i$ and $v_{i+1}$ are adjacent. An extension of a path $\widehat\gamma$ in $\widehat\Gamma$ to $\Gamma$ is a path in $\Gamma$ obtained by replacing each cone vertex $\widehat{gP}$ in $\widehat\gamma$ with a path in $\Gamma$ containing only vertices in the coset $gP$.

\begin{definition}
    \label{def_quasisyl}
    The collection of subgroups $\mathcal{P}$ is \emph{quasi-syllabic} if there exists $A>0$ and $B\geq0$ such that for any two elements of $G$ there is an $(A,B)$-quasigeodesic in $\widehat\Gamma$ that extends to an $(A,B)$-quasigeodesic in $\Gamma$.  
\end{definition}

\begin{proposition}
    \label{lem:Anthony}
    Suppose $Q\cap gPg^{-1}$ is a finite subgroup for every $gP\in G/\mathcal{P}$. If $\mathcal{P}$ is quasi-syllabic and $Q$ is a Morse subgroup, then $Q\hookrightarrow\widehat\Gamma$ is a quasi-isometric embedding.    
\end{proposition}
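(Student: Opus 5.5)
The plan is to show that for $q_1,q_2\in Q$ the distance $d_{\widehat\Gamma}(q_1,q_2)$ is bounded below by a linear function of $d_\Gamma(q_1,q_2)$. Since $Q$ is Morse it is finitely generated and undistorted in $G$, so $d_Q$ and $d_\Gamma$ are comparable. The reverse inequality $d_{\widehat\Gamma}\le d_\Gamma$ is automatic, so only this lower bound needs work. By left-invariance we may take $q_1=1$ and $q_2=q\in Q$.

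First, apply quasi-syllabicity to the pair $1,q$. This gives an $(A,B)$-quasigeodesic $\widehat\gamma$ in $\widehat\Gamma$ from $1$ to $q$ together with an extension $\gamma$ in $\Gamma$ that is an $(A,B)$-quasigeodesic. Since $Q$ is Morse, $\gamma$ lies in the $M$-neighbourhood of $Q$, where $M=m(S,A,B)$ is a uniform constant. The length of $\widehat\gamma$ is comparable to $d_{\widehat\Gamma}(1,q)$. Write $\gamma$ as a concatenation of Cayley-graph edges of $\widehat\gamma$ and subpaths $\gamma_j$, where each $\gamma_j$ has vertices in a coset $g_jP_j$ and replaces a cone vertex. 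It then suffices to show that each $\gamma_j$ has length bounded by a uniform constant $D$. Indeed, the length of $\gamma$ would then be at most $(D+1)\,\ell(\widehat\gamma)\lesssim d_{\widehat\Gamma}(1,q)$. Since $\gamma$ is a quasigeodesic, $d_\Gamma(1,q)\le \ell(\gamma)$, which gives the desired bound.

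The key step, and the main obstacle, is this uniform bound on the coset subpaths. Each $\gamma_j$ is a quasigeodesic segment (as a subpath of $\gamma$) with endpoints $x,y\in g_jP_j$, and it lies within $M$ of $Q$. So $x$ and $y$ are within $M$ of points $a,b\in Q$. Suppose $d(x,y)$ were large. Then many points of $g_jP_j$ along $\gamma_j$ are within $M$ of $Q$. By a pigeonhole argument over the finitely many elements of the $M$-ball, we get many elements $p,p'\in P_j$ and a single $s$ with $|s|\le M$ such that $g_jps\in Q$ and $g_jp's\in Q$. Hence $g_jp'p^{-1}g_j^{-1}\in Q\cap g_jP_jg_j^{-1}$, after conjugating appropriately; more precisely, $(g_jps)^{-1}(g_jp's)=s^{-1}p^{-1}p's\in Q\cap s^{-1}P_js$.

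To obtain uniformity, I would use the standard fact that $Q$ and $gP$ have bounded coarse intersection whenever $Q\cap gPg^{-1}$ is finite. Concretely, for each $r$ the set $N_r(Q)\cap N_r(gP)$ is bounded, with a bound depending on $g$. The subtle point is making this bound independent of the coset. I would handle this by translating by elements of $Q$: one may assume the point $a$ is $1$. Then $g_jP_j$ passes within $M$ of $1$, so $g_j$ can be taken in a finite ball up to right multiplication by $P_j$. This leaves only finitely many cosets to consider, and the maximum of the finitely many bounds serves as $D$. For each such coset, the finiteness of $Q\cap g P g^{-1}$ gives the bounded coarse intersection, by the pigeonhole argument above: infinitely many points in the intersection would yield infinitely many distinct elements of $Q\cap s^{-1}g P g^{-1} s$-type conjugates, and $s^{-1}gPg^{-1}s$ is again a conjugate of $P$ of the form $hPh^{-1}$, so the hypothesis applies. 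This gives $\ell(\gamma_j)\le A\,d(x,y)+B\le D$ uniformly, which completes the argument.
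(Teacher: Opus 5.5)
Your proof is correct. It uses the same ingredients as the paper: the quasi-syllabic extension $\gamma$ of a $\widehat\Gamma$-quasigeodesic, the Morse property to put $\gamma$ in a uniform neighbourhood of $Q$, and a uniform bound on the coarse intersection of $Q$ with the cosets $gP$. That uniform bound comes from translating by elements of $Q$, so that only the finitely many cosets $xP$ with $|x|\le M$ need checking.

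Where you differ is the organisation. The paper argues by contradiction that $Q\hookrightarrow\widehat\Gamma$ is a coarse embedding: bounded $\widehat\Gamma$-distance forces bounded $\Gamma$-distance. It then asserts that this suffices because both spaces are quasi-geodesic. That reduction is not valid in general; for example, the centre of the integral Heisenberg group is coarsely, but not quasi-isometrically, embedded. Your direct estimate $d_\Gamma(1,q)\le\ell(\gamma)\le(D+1)\,\ell(\widehat\gamma)$, with $\ell(\widehat\gamma)$ linear in $d_{\widehat\Gamma}(1,q)$, combined with the undistortedness of $Q$, is exactly the quantitative statement a quasi-isometric embedding requires. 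The paper's argument contains it only implicitly.

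For the per-coset bound, the paper cites \cite[Lemma 4.4]{MR5034311}, which gives $Q^{+L}\cap gP^{+L}\subseteq(Q\cap gPg^{-1})^{+M}$. Your pigeonhole argument instead proves directly that $Q^{+M}\cap gP$ is finite. This is self-contained, and it suffices because the vertices of each $\gamma_j$ lie in $gP$ itself.

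One small slip in your last paragraph. From $gps,\,gp's\in Q$, the element $(gps)^{-1}(gp's)=s^{-1}p^{-1}p's$ lies in $s^{-1}Ps$, not in $s^{-1}gPg^{-1}s$. Alternatively, $(gp's)(gps)^{-1}=gp'p^{-1}g^{-1}$ lies in $gPg^{-1}$. Either is of the form $hPh^{-1}$, so the hypothesis still applies and the argument is unaffected.
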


We need the following lemma to prove the proposition above. 

\begin{lemma}
	If $Q\cap gPg^{-1}$ is a finite subgroup for every $gP\in G/\mathcal{P}$, then $Q$ has uniformly bounded coarse intersections with $\mathcal{P}$, i.e. for every $L\geq0$ there is $K\geq0$ such that $\mathsf{diam}_\Gamma(Q^{+L}\cap gP^{+L})\leq K$ for every $gP\in G/\mathcal{P}$.
\end{lemma}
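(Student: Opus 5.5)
The plan is to first use the $Q$-action to reduce to finitely many cosets, and then show that for a single coset the coarse intersection is a finite set.

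\textbf{Reduction to finitely many cosets.} Fix $L\geq 0$. Left multiplication by an element of $Q$ is an isometry of $\Gamma$ that preserves $Q^{+L}$ and permutes $G/\calp$. So $\diam_\Gamma(Q^{+L}\cap gP^{+L})=\diam_\Gamma(Q^{+L}\cap q^{-1}gP^{+L})$ for every $q\in Q$. Suppose $Q^{+L}\cap gP^{+L}$ is non-empty, and pick a point $x$ in it. Choose $q\in Q$ with $d(q,x)\leq L$. After translating by $q^{-1}$, the coset $q^{-1}gP$ has a point within $2L$ of the identity, so $q^{-1}gP=bP$ with $|b|_S\leq 2L$. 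Since $\calp$ is finite and the ball of radius $2L$ is finite, it suffices to bound $\diam_\Gamma(Q^{+L}\cap bP^{+L})$ for the finitely many pairs $(b,P)$ with $|b|_S\leq 2L$ and $P\in\calp$. Take $K$ to be the maximum of these bounds.

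\textbf{Finiteness for a fixed coset.} Fix such a pair $(b,P)$ and let $x\in Q^{+L}\cap bP^{+L}$. Write $x=qu=bpv$ with $q\in Q$, $p\in P$ and $|u|_S,|v|_S\leq L$. Then $w:=q^{-1}bp=uv^{-1}$ lies in the ball of radius $2L$, which is a finite set. Now fix $w$, and suppose $q_1^{-1}bp_1=q_2^{-1}bp_2=w$. Then
\[
q_2q_1^{-1}=bp_2p_1^{-1}b^{-1}\in Q\cap bPb^{-1}.
\]
By hypothesis $Q\cap bPb^{-1}$ is finite. Hence, for each $w$, the elements $q$ that occur all lie in a single coset $(Q\cap bPb^{-1})q_w$, which is a finite set. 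Ranging over the finitely many $w$, only finitely many $q$ occur.

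Every point of $Q^{+L}\cap bP^{+L}$ is within $L$ of one of these finitely many $q$. It follows that $Q^{+L}\cap bP^{+L}$ is contained in a finite union of balls of radius $L$, so its diameter is finite. This finite diameter is the bound used in the reduction step.

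The only delicate point is the uniformity over all $gP\in G/\calp$, since there are infinitely many cosets. This is exactly what the translation argument in the first step handles.
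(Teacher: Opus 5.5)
Your proof is correct and follows the paper's argument: you use the same translation by an element of $Q$ to reduce to the finitely many cosets $bP$ with $|b|_S\leq 2L$ and $P\in\calp$, and then bound each of those separately. The only difference is that the paper cites \cite[Lemma 4.4]{MR5034311} for $Q^{+L}\cap gP^{+L}\subseteq (Q\cap gPg^{-1})^{+M}$, whereas you prove the needed finiteness directly: the $q$ with a fixed $w=q^{-1}bp$ in the ball of radius $2L$ form a single right coset of the finite group $Q\cap bPb^{-1}$, which is a self-contained proof of the special case of that lemma.
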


For a subset $Q\subseteq G$ and $L>0$, $Q^{+L}$ denotes the $L$-neighbourhood of $Q$ in $G$. 

\begin{proof}
	Let $L\geq 0$. For any coset  $gP\in G/\mathcal{P}$, there is a constant $M(gP,Q, L)$ such that \[ Q^{+L} \cap gP^{+L}\subseteq(Q\cap gPg^{-1})^{+M(gP,Q, L)},\] see for example~\cite[Lemma 4.4]{MR5034311} and references there in. In particular, \[\diam_\Gamma(Q^{+L} \cap gP^{+L})  \leq  \diam_\Gamma\left((Q\cap gPg^{-1})^{+M(gP,Q, L)}\right)<\infty,\] since $Q\cap gHg^{-1}$ is a finite group and $\Gamma$ is a locally finite graph. Let \[K=\max\{ \diam_\Gamma\left((Q\cap xPx^{-1})^{+M(xP,Q, L)}\right) \colon x\in G,\ |x|\leq 2L,\ P\in \mathcal{P}  \} < \infty\] and observe that $K$ is a finite number since $\mathcal{P}$ is a finite collection and there are only finitely many $x\in G$ with $|x|<2L$.
	
	Now we argue that $K$ satisfies the conclusion of the lemma. Suppose that $gP\in G/\mathcal{P}$ is arbitrary and $Q^{+L}\cap gP^{+L}$ is non-empty. Then there is an element $q\in Q$ such that $\dist_\Gamma(q, gP)\leq 2L$. In particular $gP=qxP$ where $x\in G$ and $|x|\leq 2L$. It follows that $\diam_\Gamma(Q^{+L}\cap gP^{+L}) = \diam_\Gamma(Q^{+L}\cap xP^{+L}) \leq K$.  
\end{proof}
 
 \begin{proof}[Proof of Proposition~\ref{lem:Anthony}]
 	Since both $Q$ and $\widehat\Gamma$ are quasi-geodesic spaces, it is enough to show that $Q \hookrightarrow \widehat\Gamma$ is a coarse embedding.
 	
 	Let $\dist$ denote the word metric on $Q$ with respect to some finte generating set. Since $Q\leq G$, we can assume that $\dist_G \leq \dist$ on $Q$. Since $\widehat\dist \leq \dist_G$, we have that $\widehat\dist \leq \dist$. 
 	
 	Suppose that $Q$ does not coarsely embed into $\widehat\Gamma$. Then there exists a constant $D>0$ and a sequence $q_n$ of elements of $Q$ such that $\widehat\dist(e,q_n)\leq D$ and $\dist(e,q_n)\to\infty$ for every $n\geq1$.  Since $\mathcal{H}$ is quasi-syllabic, there are constants $A>0$ and $B\geq0$ and $(A,B)$-quasigeodesics from $e$ to $q_n$ in $\widehat\Gamma$ which extend to $(A,B)$-quasigeodesics $\alpha_n$ in $\Gamma$.
 	
 	On the other hand, that $\dist(e,q_n)\to\infty$ implies that for any $K>0$ there is $n$ such that $\alpha_n$ contains a subpath of length $K$ with vertices in some left coset $gH$. Since $Q$ is Morse this subpath should be contained in an $L$-neighborhood of $Q$ where $L=L(Q,A,B)$, and that means $\diam_\Gamma(Q^{+L}\cap gH^{+L})\geq \frac{K}{A}-B$. As a consequence, $Q$ does not have  uniformly bounded coarse intersections with $\mathcal{P}$, a contradiction. 
 \end{proof}

We are now ready to prove the main theorem. 

\begin{proof}[Proof of Theorem~\ref{thm_Mstablevf}]
	By Propositions~\ref{prop_HgPgfin} and \ref{lem:Anthony}, the assumption that $Q$ is Morse and $\mathcal{P}$ is quasi-syllabic implies that the orbit inclusion $Q\hookrightarrow \widehat\Gamma$ is a quasi-isometric embedding.  Since $\mathcal{K}(G,\mathcal{P})$ is connected and has $G$-cocompact one-skeleton, it is quasi-isometric to $\widehat\Gamma$. It follows that $\widehat\Gamma$ is hyperbolic and hence $Q$ as well.  Hence $Q$ is stable by Theorem~\ref{thm_4.8tran19}. 
\end{proof}

\section{Proof of Theorem~\ref{thm_gaot}}
\label{sec_proofgaot}

In this section we prove the following theorem.

\begin{theorem}
    \label{thm_gaot}
    Let $G$ be a finitely generated group acting cocompactly on a simplicial tree $T$. Suppose that 
    \begin{enumerate}
        \item Every edge stabiliser is infinite.
        \item Every vertex stabiliser is undistorted in $G$.
        \item Every vertex stabiliser has only finite or finite-index Morse subgroups.
    \end{enumerate}
    If $H$ is an infinite-index Morse subgroup of $G$, then $H$ is virtually free and stable.
\end{theorem}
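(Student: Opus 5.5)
The plan is to first show that $H$ meets every vertex stabiliser in a finite subgroup, then deduce that $H$ acts on $T$ with finite vertex stabilisers, and finally use Bass--Serre theory to conclude that $H$ is virtually free.

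\textbf{Step 1: $H\cap G_v$ is finite for every vertex $v$.} Since the action is cocompact, there are finitely many $G$-orbits of vertices. Choose representatives $v_1,\dots,v_k$ and set $\calp=\{G_{v_1},\dots,G_{v_k}\}$. Each $G_{v_i}$ is infinite, because it contains an edge stabiliser, and edge stabilisers are infinite by hypothesis (1); this assumes every vertex has an edge, i.e.\ $T$ is not a single point. So $(G,\calp)$ is a group pair. Cosets $gG_{v_i}$ correspond to vertices $gv_i$, and adjacent vertices $u,w$ have $G_u\cap G_w\supseteq G_{[u,w]}$ infinite. Hence the tree $T$ maps onto a connected subgraph of $\calk(G,\calp)$ containing all vertices, so $\calk$ is connected. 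Hypotheses (2) and (3) are exactly conditions (1) and (2) of Proposition~\ref{prop_HgPgfin}. That proposition then gives that $H\cap gG_{v_i}g^{-1}=H\cap G_{gv_i}$ is finite for every vertex.

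If $T$ is a point, then $G$ is a vertex stabiliser. Hypothesis (3) then forces $H$ to be finite, and the theorem is trivial.

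\textbf{Step 2: virtual freeness.} $H$ is Morse, hence finitely generated (for instance by \cite{MR3956891}). $H$ acts on $T$ with finite vertex stabilisers. By Bass--Serre theory, $H$ is the fundamental group of a graph of groups with finite vertex groups. Since $H$ is finitely generated, we can pass to the minimal $H$-invariant subtree, or use the fact that a finitely generated group is carried by a finite core subgraph of groups. This gives a finite graph of finite groups. Such a fundamental group is virtually free by Karrass--Pietrowski--Solitar.

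This is the step I expect to require the most care: the quotient $H\backslash T$ may be infinite because $H$ has infinite index. I would handle it by restricting to a finite connected subgraph of groups whose fundamental group contains a finite generating set of $H$, and hence equals $H$.

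\textbf{Step 3: stability.} A virtually free group is hyperbolic. If $H$ is finite it is trivially stable; otherwise $H$ is infinite, Morse and hyperbolic, so Theorem~\ref{thm_4.8tran19} shows that $H$ is stable.

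Note that neither quasi-syllabicity nor hyperbolicity of $\calk$ is needed here. Freeness replaces the quasi-isometric embedding argument used in the proof of Theorem~\ref{thm_Mstablevf}.
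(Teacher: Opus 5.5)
Your proposal is correct and follows essentially the paper's route. Step~1 is exactly the paper's argument: the $G$-equivariant map $T\to\calk(G,\calp)$ gives connectivity, and Proposition~\ref{prop_HgPgfin} then shows that $H$ acts on $T$ with finite vertex stabilisers. Your finite core subgraph of groups plays the role of the finite quotient $H\backslash Y$ of the paper's cocompact subtree $Y=\bigcup_{h\in H}hK$. The only difference is that you conclude virtual freeness from the structure of finite graphs of finite groups, whereas the paper uses that $H$ is quasi-isometric to the tree $Y$. You also make explicit several points the paper leaves implicit: that vertex stabilisers are infinite, so $(G,\calp)$ is a group pair; the degenerate case where $T$ is a point; and the final appeal to Theorem~\ref{thm_4.8tran19} for stability.
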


\begin{proof}
    Let $\calp$ be a collection of stabilisers of representatives of $G$-action on the vertex set. The collection $G/\calp$ is in one-to-one $G$-equivariant correspondence with the vertex set of $T$. Assumption (1) implies that we have a $G$-equivariant embedding $T\to\calk(G,\calp)$, so $\calk(G,\calp)$ is connected. Therefore, by Proposition~\ref{prop_HgPgfin}, $H$ acts properly on $\calk(G,\calp)$, and it follows that the $H$-action on $T$ is proper. 

    Since that $H$ is finitely generated, so we can choose $v\in T$ and a finite generating set $\{s_1,\cdots,s_n\}$ of $H$. Let $K$ be the union of geodesics $[v,s_iv]$, where $1\le i\le n$. Let $Y$ be the union of $hK$ where $h\in H$. Then $Y$ is connected subspace of $T$ and it is quasi-isometric to $H$, so $Y$ is a tree and $G$ is virtually free. 
\end{proof}

\begin{remark}
    Given a finitely generated group $G$ acting cocompactly and acylindrically on a simplicial tree $T$ with infinite edge stabilisers, then one can show that the coset intersection complex $\calk(G,\calp)$ defined in the proof above is quasi-isometric to $T$.
\end{remark}

\bibliographystyle{alphaurl} 
\bibliography{xbib}

@article {MR3978121,
    AUTHOR = {Kim, Heejoung},
     TITLE = {Stable subgroups and {M}orse subgroups in mapping class
              groups},
   JOURNAL = {Internat. J. Algebra Comput.},
  FJOURNAL = {International Journal of Algebra and Computation},
    VOLUME = {29},
      YEAR = {2019},
    NUMBER = {5},
     PAGES = {893--903},
      ISSN = {0218-1967,1793-6500},
   MRCLASS = {20F65 (20F67 57M07)},
  MRNUMBER = {3978121},
MRREVIEWER = {Stefan\ Witzel},
       DOI = {10.1142/S0218196719500346},
}

@article {MR3956891,
    AUTHOR = {Tran, Hung Cong},
     TITLE = {On strongly quasiconvex subgroups},
   JOURNAL = {Geom. Topol.},
  FJOURNAL = {Geometry \& Topology},
    VOLUME = {23},
      YEAR = {2019},
    NUMBER = {3},
     PAGES = {1173--1235},
      ISSN = {1465-3060,1364-0380},
   MRCLASS = {20F67 (20F65)},
  MRNUMBER = {3956891},
MRREVIEWER = {Jack\ O.\ Button},
       DOI = {10.2140/gt.2019.23.1173},
}

@article {MR5034311,
    AUTHOR = {Abbott, Carolyn and Mart\'inez-Pedroza, Eduardo},
     TITLE = {The quasi-isometry invariance of the coset intersection
              complex},
   JOURNAL = {Algebr. Geom. Topol.},
  FJOURNAL = {Algebraic \& Geometric Topology},
    VOLUME = {26},
      YEAR = {2026},
    NUMBER = {2},
     PAGES = {659--698},
      ISSN = {1472-2747,1472-2739},
   MRCLASS = {20F65 (57M07)},
  MRNUMBER = {5034311},
       DOI = {10.2140/agt.2026.26.659},
}

@misc{balasubramanya2026stablesubgroupsgraphproducts,
      title={Stable subgroups of graph products}, 
      author={Sahana H Balasubramanya and Marissa Chesser and Alice Kerr and Johanna Mangahas and Marie Trin},
      year={2026},
      eprint={2511.11176},
      archivePrefix={arXiv},
      primaryClass={math.GR},
}

@article {MR3426695,
    AUTHOR = {Durham, Matthew Gentry and Taylor, Samuel J.},
     TITLE = {Convex cocompactness and stability in mapping class groups},
   JOURNAL = {Algebr. Geom. Topol.},
  FJOURNAL = {Algebraic \& Geometric Topology},
    VOLUME = {15},
      YEAR = {2015},
    NUMBER = {5},
     PAGES = {2839--2859},
      ISSN = {1472-2747,1472-2739},
   MRCLASS = {20F65 (30F60 57M07)},
  MRNUMBER = {3426695},
MRREVIEWER = {Sebastian\ Wolfgang\ Hensel},
       DOI = {10.2140/agt.2015.15.2839},
}

@article {MR4057355,
    AUTHOR = {Genevois, Anthony},
     TITLE = {Hyperbolicities in {${\rm CAT}(0)$} cube complexes},
   JOURNAL = {Enseign. Math.},
  FJOURNAL = {L'Enseignement Math\'ematique},
    VOLUME = {65},
      YEAR = {2019},
    NUMBER = {1-2},
     PAGES = {33--100},
      ISSN = {0013-8584,2309-4672},
   MRCLASS = {20F65 (20F67 53C23)},
  MRNUMBER = {4057355},
MRREVIEWER = {Thomas\ Haettel},
       DOI = {10.4171/lem/65-1/2-2},
}

\end{document}